\documentclass[11pt]{article}
\usepackage[margin=1in]{geometry}
\usepackage{tikz}
\usepackage[small,bf,hang]{caption}
\usepackage{float}
\usepackage{siunitx}
\usepackage{amsmath,amsthm,amssymb}
\usepackage{graphicx}
\usepackage{subfig}
\usepackage{hyperref}
\hypersetup{colorlinks=true,    citecolor = blue,    linkcolor=black,    filecolor=magenta,    urlcolor=cyan}
\newtheorem{theorem}{Theorem}

\newtheorem{lemma}{Lemma}
\newtheorem{claim}{Claim}

\newcommand{\sm}{\setminus}

\usepackage[utf8]{inputenc}  
\usepackage[T1]{fontenc} 
\usepackage{amsthm}
\usepackage{amsmath}
\usepackage{amssymb}
\usepackage{enumitem}
\usepackage{xcolor}
\usepackage{amsfonts}
\usepackage{fdsymbol}
\usepackage{hyperref}
\theoremstyle{remark}

\newcommand{\cC}{\mathcal{C}}
\newcommand{\cP}{\mathcal{P}}
\newcommand{\cR}{\mathcal{R}}

\title{A positive instance of Scott's Conjecture on induced subdivisions}
\author{Kathie Cameron
 \thanks{Department of Mathematics, Wilfrid Laurier University,
 Waterloo, ON, Canada, N2L 3C5. Email: \texttt{kcameron@wlu.ca}. ORCID: 0000-0002-0112-2494. 
 We acknowledge the support of the Natural Sciences and Engineering Research Council of Canada (NSERC), [funding reference number RGPIN-2016-06517]. Cette recherche a \'et\'e financ\'ee par le Conseil de recherches en sciences naturelles et en g\'enie du Canada (CRSNG), [num\'ero de r\'ef\'erence RGPIN-2016-06517].}
 \and Ni Luh Dewi Sintiari
 \thanks{Department of Informatics, Universitas Pendidikan Ganesha,
 Bali, Indonesia, 81116. Email: \texttt{luh.dewi.sintiari@undiksha.ac.id}. ORCID: 0000-0002-6562-4434. We acknowledge the support of the Natural Sciences and Engineering Research Council of Canada (NSERC), [funding reference number RGPIN-2016-06517]. Cette recherche a \'et\'e financ\'ee par le Conseil de recherches en sciences naturelles et en g\'enie du Canada (CRSNG), [num\'ero de r\'ef\'erence RGPIN-2016-06517].}
 \and Sophie Spirkl\thanks{Department of Combinatorics and Optimization, University of Waterloo, Waterloo, ON, Canada, N2L 3G1. We acknowledge the support of the Natural Sciences and Engineering Research Council of Canada (NSERC), [funding reference number RGPIN-2020-03912]. Cette recherche a \'et\'e financ\'ee par le Conseil de recherches en sciences naturelles et en g\'enie du Canada (CRSNG), [num\'ero de r\'ef\'erence RGPIN-2020-03912]. This project was funded in part by the Government of Ontario. This research was conducted while Spirkl was an Alfred P. Sloan Fellow.}}

\begin{document}
\maketitle
\begin{abstract}
For a graph $G$, $\chi(G)$  denotes the chromatic number of $G$ and $\omega(G)$ denotes the size of the largest clique in $G$.  A hereditary class of graphs is called $\chi$-bounded if there is a function $f$ such that for each graph $G$ in the class, $\chi(G) \le f(\omega(G))$.

Scott (1997) conjectured that for every graph $H$, the class of graphs which do not contain any subdivision of $H$ as an induced subgraph is $\chi$-bounded. He proved his conjecture when $H$ is a tree and when $H$ is the complete graph on four vertices, $K_4$. Esperet and Trotignon (2019) proved that the conjecture holds when $H$ is $K_4$ with one edge subdivided once. 

Scott's conjecture was disproved by Pawlik et al.\ (2014). Chalopin et al.\ (2016) gave more counterexamples including the graph obtained from $K_4$ by subdividing each edge of a 4-cycle once.  

We prove that the conjecture holds when $H$ consists of a complete bipartite graph with and additional vertex which has exactly two neighbours, on the same side of the bipartition. As a special case, this proves Scott's conjecture when $H$ is obtained from $K_4$ by subdividing two disjoint edges.  
\end{abstract} 

\section{Introduction}

All graphs in this paper are finite and simple. 
A \emph{clique} in a graph is a set of pairwise adjacent  vertices.  
An \emph{independent set} in a graph is a set of pairwise non-adjacent vertices. A  \emph{colouring} of a graph is a partition of its vertices into independent sets; the \emph{chromatic number} of the graph is the minimum number of independent sets required in a colouring. 
For a graph $G$, let $\chi(G)$  denote the chromatic number of $G$ and let $\omega(G)$ denote the size of the largest clique in $G$.  
A class of graphs is called \emph{hereditary} if it is closed under taking induced subgraphs and under isomorphism. 

A hereditary class of graphs is called $\chi$\emph{-bounded} if there is a fixed function $f$ such that for each graph $G$ in the class, $\chi(G) \le f(\omega(G))$  \cite{GyarfasChiBdd}; $f$ is called a \emph{binding function}. For example, perfect graphs are the unique maximal class of graphs which are $\chi$-bounded with binding function the identity.

For graphs $G$ and $H$ and a set $\mathcal{H}$ of graphs, $G$ is called \emph{$H$-free} if G has no induced subgraph isomorphic to $H$ and $G$ is $\mathcal{H}$\emph{-free} if $G$ has no induced subgraph isomorphic to any graph in $\mathcal{H}$. Subdividing an edge means replacing the edge by a path of length at least one. A subdivision of a graph is obtained by subdividing some of its edges. For graphs $G$ and $H$, $G$ is said to be IS$H$-free if $G$ has no induced subgraph isomorphic to a subdivision of $H$. 

Erd\H os  \cite{Erdos} showed that for positive integers $g$ and $k$, there are graphs with girth higher than $g$ and chromatic number at least $k$. Thus, if $H$ is a graph which contains a cycle, the class of $H$-free graphs is not $\chi$-bounded. Gy\'arf\'as \cite{GyarfasSC} and Sumner \cite{Sumner} conjectured the converse, that for every tree $T$, the class of $T$-free graphs is $\chi(G)$-bounded. Scott \cite{Scott} proved a topological version of the Gy\'arf\'as-Sumner Conjecture:  The class of IS$T$-free graphs is $\chi$-bounded.  

More generally, Scott \cite{Scott} conjectured that for any graph $H$, the class of IS$H$-free graphs is $\chi$-bounded. Scott also proved his conjecture when $H$ is the complete graph on four vertices, $K_4$ (see  \cite{LMT}). Scott's Conjecture  is known to hold for all graphs on at most four vertices and some other graphs (see, for example,  \cite{MCIPASNT}). Scott's Conjecture was disproved by Pawlik et al.\ \cite{ScottConjFalse}; using the constructino of \cite{ScottConjFalse}, Chalopin et al.\ \cite{Chalopin} showed that Scott's conjecture does not holds for the graph obtained from $K_4$ by subdividing each edge of a 4-cycle once. 
Finding graphs $H$ for which the conjecture holds remains intriguing. 

The complete bipartite graph $K_{m,n}$ consists of two independent sets of vertices, $U$ of size $m$ and $W$ of size $n$, and all possible edges with one end in $U$ and the other end in $W$. 
The graph $K_4^+$ is $K_4$ with one edge subdivided and $K_4^{++}$ is $K_4$ with two disjoint edges subdivided. Esperet and Trotignon \cite{K4+} proved that the class of IS$K_4^+$-free graphs is $\chi$-bounded. 

Our main result is that the conjecture holds for bipartite graphs $H$ of a particular form: a complete bipartite graph together with a vertex adjacent to exactly two vertices in one part of the bipartition. This implies that IS$K_4^{++}$-free graphs are $\chi$-bounded. This also  implies that when $H$ is $K_{m,n}$ together with an additional vertex of degree 1,  then IS$H$-free graphs are $\chi$-bounded.

\medskip

\section{Preliminaries}

Let $G$ be a graph with vertex-set $V(G)$ and edge-set $E(G)$. For a vertex $v$ of $G$, the \emph{neighbour-set} of $v$ is set of vertices adjacent to $v$, and is denoted by $N_G(v)$ or $N(v)$. Let $S$ be a subset of $V(G)$. The \emph{neighbour-set} of $S$, denoted $N_G(S)$ or $N(S)$, is the set of vertices outside of $S$ which have a neighbour in $S$. 

Let $a$ be a nonnegative integer. A vertex $v \in G \sm S$ is \emph{$a$-disconnected} to $S$ if $v$ has at most $a$ non-neighbours in $S$; when $a=0$, we simply say that $v$ is complete to $S$. Analogously,  $v \in G \sm S$ is  \emph{$a$-connected} to $S$ if $v$ has at most $a$ neighbours in $S$, and when $a=0$ we say that $v$ is anticomplete to $S$. 
The same terminology is used when vertex $v$ is replaced by a set of vertices. 
\medskip

We will use the following results:

\begin{theorem}[K{\"u}hn and Osthus,  \cite{KuhnOsthus}]
\label{thm:kuhn-osthus}
Let $H$ be a graph and let $s$ be a positive integer. 
Then there is an integer $d = d(H, s)$ such that every graph with  average degree at least $d$  contains either 
an induced subdivision of $H$ or a complete bipartite (not necessarily induced) subgraph $K_{s,s}$.

\end{theorem}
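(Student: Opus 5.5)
Since this is the theorem of K\"uhn and Osthus, which the paper quotes without proof, what follows is only how I would attempt it. The idea is to first find a non-induced subdivision of a large clique whose subdivision paths are short. Then I would use the absence of $K_{s,s}$, together with Ramsey arguments, to pass to a sub-configuration in which all unwanted edges have been removed.

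\textbf{Step 1: cleaning up.} Assume $G$ has average degree at least $d$, with $d$ huge compared with $|V(H)|$ and $s$, and that $G$ has no $K_{s,s}$ subgraph. Passing to a subgraph, we may assume $G$ has minimum degree at least $d/2$. We may also take a bipartite subgraph with minimum degree at least $d/4$. Because $G$ has no $K_{s,s}$, the K\H{o}v\'ari--S\'os--Tur\'an bound shows that any $s$ vertices have fewer than $s$ common neighbours. So neighbourhoods overlap very little, and $G$ has a weak expansion property: small sets have neighbourhoods far larger than themselves.

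\textbf{Step 2: a short topological clique.} Let $t$ be large in terms of $|V(H)|$ and $s$. By Bollob\'as--Thomason/Koml\'os--Szemer\'edi, average degree at least $ct^2$ forces a subdivision of $K_t$. I would use a variant in which every subdivision path has length bounded in terms of $t$ and $s$. The sparse, expanding structure from Step~1 is what should make such a bounded-length version available, via breadth-first growth from branch vertices.

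\textbf{Step 3: making it induced (the main obstacle).} The subdivision from Step~2 is generally far from induced: there may be chords inside a path, and edges between different paths or between paths and branch vertices. Chords inside a single path are easy: shortcut along them, which keeps the path an induced path between the same ends.

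The cross edges are the real difficulty. I would colour each pair of paths, and each pair of a branch vertex and a path, by the pattern of edges between them. Since every path has bounded length, there are only boundedly many patterns. Ramsey's theorem then yields a large sub-clique of branch vertices on which the pattern is homogeneous.

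If the homogeneous pattern has no edges, we take the subdivision of $H$ inside this sub-clique. This copy is induced, and we are done. If the homogeneous pattern contains an edge, then some fixed positions on many disjoint paths are pairwise adjacent. A second Ramsey and pigeonhole step turns this into an $s$ by $s$ complete bipartite subgraph, which contradicts our assumption.

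The step I expect to be hardest is making this dichotomy precise. Homogeneous adjacency between positions of different paths has to yield $K_{s,s}$ (or a large clique, which contains $K_{s,s}$), rather than something weaker. I would handle this by also recording, in the colouring, the relative order of the path indices. Once that is done, I would still need to check that the rerouting in Step~3 does not reintroduce cross edges, or bound how often it can happen.
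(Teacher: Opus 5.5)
\textbf{The gap is in Step~2, and the rest of the argument depends on it.} No subdivision of $K_t$ with paths of length bounded in terms of $t$ and $s$ exists in general. For every $L$ and $d$ there are graphs of average degree at least $d$ and girth larger than $\max\{3L,4\}$ (Erd\H{o}s). Such graphs are $C_4$-free, hence $K_{s,s}$-free for $s\ge 2$. In any subdivision of $K_t$ with $t\ge 3$, the three paths joining three branch vertices form a cycle, so one of them has length greater than $L$. The ``sparse, expanding structure'' from Step~1 is present in exactly these graphs, so breadth-first growth cannot give short paths.

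Step~3 relies entirely on this bound. Only bounded-length paths give a finite set of adjacency patterns, and the Ramsey step needs $t$ to exceed a hypergraph Ramsey number in the number of patterns. As you set it up, that number depends on $L(t,s)$, so your choice of $t$ is also circular. Your argument therefore breaks down in the sparse, large-girth regime, which is where the theorem has its content. There the induced subdivision must have long paths, and Ramsey homogenisation of edge patterns along long paths is not available.

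Given bounded-length paths, your Step~3 dichotomy does work. Colour sets of branch indices by the pattern for each order type. An edge in a homogeneous pattern then yields $K_{s,s}$: take two blocks of $s$ paths (or branch vertices) whose indices realise that order type between every cross pair. Homogeneous path length $1$ gives a large clique. But this only covers graphs that happen to contain short topological cliques.

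The missing idea is a mechanism that makes long paths induced. The subdivision has to be induced by construction, for example by routing each new path around the neighbourhood of everything already embedded. Then $K_{s,s}$-freeness must be used quantitatively, to show that these forbidden sets do not destroy the expansion needed to route the next path. The paper only cites K\"uhn and Osthus for this statement, so there is no in-paper proof to compare with.

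A minor point: the bipartite subgraph in Step~1 is not induced, so all the cleaning in Step~3 must be done with respect to the edges of $G$, not of that subgraph.
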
	

Recently, two groups of authors independently proved a strengthening of the above, giving polynomial bounds (here we state the version due to \cite{polyko2}, which gives more explicit bounds): 
\begin{theorem}[Bourneuf, Buci\'c, Cook, and Davies \cite{polyko1}, Girão and Hunter \cite{polyko2}] \label{thm:poly} 
    Let $H$ be a graph and let $s$ be a positive integer. Then every graph with average degree at least $s^{500|V(H)|^2}$ contains either an induced subdivision of $H$ or a complete bipartite (not necessarily induced) subgraph $K_{s,s}$.
\end{theorem}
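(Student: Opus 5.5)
Theorem~\ref{thm:poly} is quoted from \cite{polyko1,polyko2} and the paper uses it as a black box. If I had to prove it, I would follow the Kühn--Osthus strategy behind Theorem~\ref{thm:kuhn-osthus}, but make every step polynomial in $s$.

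Set $h=|V(H)|$. Let $G$ be a $K_{s,s}$-free graph (as a subgraph) with average degree $d\ge s^{500h^2}$. The goal is an induced subdivision of $H$.

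\textbf{Step 1: pass to a sparse, well-behaved subgraph.} First take a subgraph of minimum degree at least $d/2$. Then use a dependent-random-choice and Kővári--Sós--Turán type count to show the following. Since $G$ has no $K_{s,s}$, a random induced subgraph, obtained by sampling each vertex with probability $d^{-1+\varepsilon}$ and deleting a few bad vertices, has average degree $d'\ge d^{c/h}$. It also has few short cycles, in particular few $C_4$'s relative to its edges. Without the $K_{s,s}$ hypothesis this fails, because dense complete bipartite pieces survive the sampling.

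Iterating this a bounded number of times, and paying a polynomial loss each time, I would aim for an \emph{induced} subgraph $G'$ with three properties: average degree still $s^{\Omega(h^2)}$; every two vertices have at most $s$ common neighbours; and local neighbourhoods look tree-like.

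\textbf{Step 2: find expansion.} Inside $G'$, pass to a (sublinear) expander subgraph $G''$ in the sense of Komlós--Szemerédi, losing only a constant factor in degree. Then choose $h$ branch vertices $b_1,\dots,b_h$ far apart, each with a large ball of vertices whose neighbourhood in the ball is tree-like.

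\textbf{Step 3: connect the branch vertices.} Go through the edges of $H$ one at a time. For each edge $b_ib_j$, join the corresponding branch vertices by a path in $G''$ that avoids the closed neighbourhoods of all previously built paths, except near their endpoints. Expansion makes this possible as long as the forbidden set is small compared with the expansion at the current scale. The forbidden set has size about (total path length)$\times$(max degree), so I would first delete the vertices of huge degree, which is allowed by a degree-regularisation step. Finally, shortcut each path to an induced path inside the allowed region. Adjacencies between distinct paths are already excluded, so the union is an induced subdivision of $H$.

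\textbf{Main obstacle.} I expect Step 3 to be the crux: keeping the paths \emph{mutually anticomplete} while controlling their lengths. In $G$ a path's neighbourhood can be enormous, and it is only the $K_{s,s}$-freeness, via the sparsity of $G'$ from Step 1, that stops these neighbourhoods from blocking expansion. I would attack it by choosing path lengths $O(\log n)$, using the robust expansion of $G''$ against deleted sets of size $d^{1-\delta}\cdot\mathrm{polylog}$, and tuning $\delta$ so that all losses combine to an exponent of order $h^2$. That bookkeeping is what produces the bound $s^{500h^2}$.
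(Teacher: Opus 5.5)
The paper gives no proof of this statement. It is quoted from Bourneuf--Buci\'c--Cook--Davies and Gir\~ao--Hunter, and is used only to make the final numerical bound explicit; the $\chi$-boundedness argument itself needs only Theorem~\ref{thm:kuhn-osthus}. So your sketch has to stand on its own, and it breaks at Step~1, which is false in the regime $s\gg h^2$.

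Take $G(n,q)$ with $q=d^{-\Theta(1/s)}$, sparse enough that deleting one vertex from each copy of $K_{s,s}$ removes only a small fraction of the vertices. The resulting graph $G$ is $K_{s,s}$-free with average degree about $d=nq$, and $n$ can be chosen so that $d=s^{500h^2}$.

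\textbf{Your sampling step fails here.} A random vertex sample of $G$ with average degree $d'$ is essentially $G(N,q)$. That graph has about $(d')^2q$ copies of $C_4$ per edge. So ``few $C_4$'s'' forces $d'=O(q^{-1/2})=d^{O(1/s)}=s^{O(h^2/s)}$, not $d^{c/h}$, and deleting ``a few bad vertices'' cannot help.

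\textbf{Worse, the graph $G'$ you aim for does not exist.} Two facts combine here:
\begin{enumerate}
\item With high probability, every vertex set of $G(n,q)$ inducing average degree at least $C\log n$ has edge density within a constant factor of $q$ (a first-moment computation).
\item An $m$-vertex graph with all codegrees at most $s$ has average degree at most $\sqrt{sm}+1$.
\end{enumerate}
Together these show that every induced subgraph of $G$ with all codegrees at most $s$ has average degree $O(s/q+\log n)=s^{1+O(h^2/s)}+O(h^2\log s)$. This is $O(s^2)$ once $s\ge C'h^2$, far below the $s^{\Omega(h^2)}$ you need.

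The underlying reason is that $K_{s,s}$-freeness controls density only through K\H{o}v\'ari--S\'os--Tur\'an-type counting, whose exponent depends on $s$. A reduction to a $C_4$-poor, bounded-codegree or locally tree-like induced subgraph can therefore cost a factor of order $s$ in the exponent ($d\mapsto d^{O(1/s)}$). That is incompatible with a bound of the form $s^{O(h^2)}$ valid for all $s$, so no tuning of $\delta$ in your final bookkeeping can produce $s^{500h^2}$.

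In other words, the crux is not Step~3 but precisely the step you treat as routine. The published proofs handle it with a genuinely different tool; Bourneuf et al.\ describe theirs as an induced variant of the K\H{o}v\'ari--S\'os--Tur\'an theorem. Steps~2--3 are only an outline in any case, and as written they have no valid input.
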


Let us say that a graph $G$ is $k$-connected if either $G$ is a complete graph with at least $k+1$ vertices, or if $G$ is a non-complete graph such that for every set $X$ of at most $k-1$ vertices in $G$, we have that $G \setminus X$ is connected. Alon, Kleitman, Thomassen, Saks, and Seymour \cite{alon} first showed that every graph with very large chromatic number contains a highly connected subgraph with large chromatic number and connectivity. The current best bounds for this result are due to Nguyen \cite{Nguyen}: 
\begin{theorem}[Nguyen \cite{Nguyen}]
\label{thm:narayanan}
	For each $k \in \mathbb{N}$, every graph $G$ with chromatic number at least $\left(3 + \frac{1}{16}\right)k$ contains a subgraph $H$ with connectivity and chromatic number each at least $k$.
\end{theorem}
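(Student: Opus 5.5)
The plan is an extremal argument: pick a suitable minimal subgraph, and show that any small separator in it would let us recolour and contradict minimality. Let $m=\lceil(3+\frac1{16})k\rceil$ and pass first to a $\chi$-critical subgraph $G_0\subseteq G$ with $\chi(G_0)\ge m$, so that every vertex of $G_0$ has degree at least $\chi(G_0)-1$.

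Among all subgraphs $F$ of $G_0$ with $\chi(F)\ge k$, I would choose one extremal for a potential of the form
$$\Phi(F)=\chi(F)-\alpha\cdot\frac{|E(F)|}{|V(F)|}$$
or, more simply, a vertex-minimal $F$ subject to $\chi(F)\ge k$ and an edge-density condition such as $|E(F)|\ge \beta k(|V(F)|-\gamma k)$. Here $\alpha,\beta,\gamma$ are constants to be tuned at the end. Criticality of $G_0$ gives $|E(G_0)|\ge \frac{m-1}{2}|V(G_0)|$, so $G_0$ itself satisfies the density condition and such an $F$ exists. The goal is to show that $F$ is $k$-connected; since $\chi(F)\ge k$ by construction, this proves the theorem.

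Suppose instead that $F$ has a separator $X$ with $|X|\le k-1$. Write $V(F)\setminus X=A_1\cup A_2$ with $A_1$ anticomplete to $A_2$, and set $F_i=F[A_i\cup X]$. Two facts drive the argument.
\begin{enumerate}
\item \emph{Colouring inequality.} Since $A_1$ and $A_2$ are anticomplete, colouring $F[A_1]$ and $F[A_2]$ with shared colours and giving $X$ fresh colours yields
$$\chi(F)\le\max(\chi(F[A_1]),\chi(F[A_2]))+|X|.$$
Alternatively, colour $F_1$ and $F_2$ separately and merge the two colourings after permuting colours on $X$.
\item \emph{Edge counting.} We have $|E(F)|\le|E(F_1)|+|E(F_2)|$, while $|V(F_1)|+|V(F_2)|=|V(F)|+|X|$.
\end{enumerate}
By minimality of $F$, each proper piece $F_i$ (with strictly fewer vertices) either has $\chi(F_i)<k$ or violates the density condition. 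If both pieces violate density, edge counting gives $|E(F)|<\beta k(|V(F)|+|X|-2\gamma k)$. This contradicts the density of $F$ provided $|X|<\gamma k$, which we arrange by taking $\gamma\ge1$. If some piece has $\chi<k$, then the colouring inequality bounds $\chi(F)$ by roughly $2k$ plus a correction. This must be played off against the density: a dense graph has large average degree, hence many edges inside a side, hence that side would itself be a smaller valid candidate.

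The main obstacle is the mixed case, where one side is chromatically small but dense and the other is chromatically large but sparse. Getting the sharp constant $3+\frac1{16}$ rather than something like $4$ or $7$ depends on handling it precisely. My first attempt there would be to refine the colouring bound by recolouring only the vertices of $X$ that actually have neighbours on the chromatically large side. I would also use the lower bound on the minimum degree of $F$ inherited from minimality: deleting a vertex of low degree keeps the density condition, so $F$ has minimum degree at least about $\beta k$. This makes every side $A_i$ have at least about $\beta k-|X|$ vertices, and so forces enough edges into each side. Balancing the resulting inequalities in $\alpha,\beta,\gamma$ should give the threshold $m\ge(3+\frac1{16})k$. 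I expect that optimising this final balance is where essentially all the difficulty lies.
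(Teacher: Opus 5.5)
The paper gives no proof of this statement; it is quoted from Nguyen. So your proposal has to stand on its own, and it has a genuine gap exactly where you defer the work.

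\textbf{Where the argument fails.} The only case you actually close is the one where both $F_1,F_2$ violate the density condition. That is Mader's argument, and it carries no chromatic information. The other cases cannot be closed with your tools.
\begin{enumerate}
\item If both pieces have $\chi<k$, your inequality gives $\chi(F)\le 2k-2$. This contradicts nothing, because the only chromatic requirement on $F$ is $\chi(F)\ge k$. Since the extremal object is asked for exactly the target chromatic number, there is no slack to absorb the up to $k-1$ colours lost at $X$.
\item In the mixed case, a side with $\chi(F_2)<k$ can be arbitrarily dense (for instance a large complete bipartite graph), so no edge count helps. Your minimum-degree remark only says the sides are not tiny.
\end{enumerate}

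\textbf{A disconnected graph passes every test you use.} Every property you extract from minimality holds for the following graph. Take $k\ge3$ and $\beta=\gamma=1$; this is admissible, since $k\le\frac{m-1}{2}$. Let $J$ be an iterated Haj\'os join of $p\ge2$ copies of $K_k$. It is $k$-critical, with $p(k-1)+1$ vertices and $p\binom k2-p+1<k(|V(J)|-k)$ edges. Let $F=J\sqcup K_{\lfloor u/2\rfloor,\lceil u/2\rceil}$, with $u$ least such that $F$ meets the density condition. Then $\chi(F)=k$. Every subgraph of $F$ with fewer vertices either misses a vertex of $J$, and so is $(k-1)$-colourable, or has too few edges by the choice of $u$. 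Yet $F$ is disconnected: this is precisely your mixed case with $X=\emptyset$.

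Retuning does not help. If $\beta k\le\frac k2-\frac1{k-1}$, every $k$-critical graph meets the density condition (by minimum degree $k-1$, or by the Kostochka--Yancey bound in the upper part of this range). Then the Haj\'os join of two copies of $K_k$, which has a $2$-vertex cut, is itself a minimal candidate. For larger $\beta$, the construction above works with $p$ large.

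\textbf{Two smaller slips.} With $\gamma\ge1$ the graph $K_k$ qualifies, so for $G=K_m$ your $F$ is $K_k$, which has no separator but is not $k$-connected. Also, merging colourings of $F_1$ and $F_2$ ``after permuting colours on $X$'' is impossible unless they induce the same partition of $X$.

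\textbf{What is missing.} You need a mechanism by which chromatic number itself, rather than edge density, survives a small separation. The useful bound is one-sided. For a separation $(A,B)$ with $X=A\cap B$, colour $F[A]$ first, then colour $F[B\setminus A]$ with colours not used on $X$. This gives $\chi(F)\le\max\{\chi(F[A]),\ \chi(F[B\setminus A])+|X|\}$. In a $c$-critical graph it forces \emph{both} $F[A\setminus B]$ and $F[B\setminus A]$ to have chromatic number at least $c-|X|$, whereas your symmetric inequality controls only the larger side.

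A natural route then takes $c$ around $3k$ and picks an extremal separation of order less than $k$, say one with an inclusion-minimal side. One must then show that any further small separation inside that side can be combined with the first to contradict the extremal choice. Since the orders of nested separators add, one heuristically pays $k$ for the target plus up to $k-1$ for each of two separators. That is where a constant near $3$ comes from; reaching $3+\frac1{16}$ needs Nguyen's finer analysis of exactly this step. Your proposal contains none of this, so as written it does not establish the theorem for any constant.
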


Note that the subgraph in Theorem \ref{thm:narayanan} can be taken to be induced.

We need one more result, which requires defining the graphs we exclude. Let $a$ and $b$ be positive integers. Define $\cP_{a,b}$ to be the graph whose vertex-set can be partitioned into nonempty sets $\{v\},\{x,y\}, A, B$ with the following properties:
\begin{enumerate}[label=(\alph*)]
	\item $\{x,y\}$, $A$, and $B$ are independent sets, with $|A| = a$ and $|B| = b$;
	\item $v$ is complete to $\{x,y\}$;
	\item $\{x,y\}$ is complete to $A$;
	\item $A$ is complete to $B$;
	\item there are no edges other than the edges listed above.
\end{enumerate}
Note that  $\cP_{2,1}$ is $K_4^{++}$ (that is, $K_4$ with two disjoint edges each subdivided once).  In general, the graph $\cP_{a,b}$ is isomorphic to the complete bipartite graph $K_{a,b+2}$ together with an additional vertex $v$ adjacent to exactly two vertices $x$ and $y$ in the part of size $b+2$. 

A different way to describe $\cP_{a, b}$ is that it can be obtained from a four-vertex path by substituting independent sets of size $1, 2, a$, and $b$ for its vertices in order. In particular, a result of Alon, Pach, and Solymosi \cite{aps} implies that $\cP_{a,b}$-free graphs satisfy the Erd\H{o}s-Hajnal conjecture; in other words: 
\begin{theorem}[Alon, Pach, Solymosi \cite{aps}] \label{thm:aps}
There is a constant $c = c(a, b) > 0$ such that every $\cP_{a,b}$-free graph $G$ on $n$ vertices contains a clique or an independent set on $n^c$ vertices. 
\end{theorem}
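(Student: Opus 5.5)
The statement is quoted from \cite{aps}, and I would recover it through the substitution closure of the Erd\H{o}s--Hajnal property proved there. Say a graph $H$ has the \emph{EH-property} if there is $c=c(H)>0$ such that every $H$-free graph on $n$ vertices has a clique or independent set of size $n^{c}$.

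\textbf{Base cases.} I first check that the building blocks have the EH-property.
\begin{enumerate}[label=(\roman*)]
\item The one-vertex graph has it trivially.
\item The edgeless graph $\overline{K_t}$ has it. A $\overline{K_t}$-free graph has independence number less than $t$, so by Ramsey's theorem it has a clique of size about $n^{1/(t-1)}$.
\item The four-vertex path $P_4$ has it. $P_4$-free graphs (cographs) are perfect, so $\omega(G)\alpha(G)\ge n$, giving a homogeneous set of size $\sqrt n$.
\end{enumerate}

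\textbf{Substitution lemma.} This is the core step. Let $H_1,H_2$ have the EH-property, let $v\in V(H_1)$, and let $H$ be obtained by substituting $H_2$ for $v$. I claim $H$ has the EH-property.

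Let $G$ be $H$-free on $n$ vertices, and assume it has no homogeneous set of size $n^{\varepsilon}$, with $\varepsilon$ to be chosen small. Greedily remove vertex-disjoint induced copies of $H_1$. If at least $n/2$ vertices remain, they induce an $H_1$-free graph, which has a homogeneous set of size $(n/2)^{c_1}$, and we are done. Otherwise there are $m\ge n/(2h)$ disjoint copies, where $h=|V(H_1)|$. Label their vertices by $V(H_1)$, obtaining classes $V_u$ for $u\in V(H_1)$, each of size $m$.

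\textbf{Regularising the copies.} For each pair $u,w$, I would pass to large subsets on which the bipartite graph between $V_u$ and $V_w$ is either almost complete or almost empty, the choice matching whether $uw$ is an edge of $H_1$. The tool is the Kővári--Sós--Turán / R\"odl-type dichotomy: a bipartite graph with no homogeneous-type structure of size $m^{\varepsilon}$ can be shrunk polynomially so that it becomes $\delta$-dense or $\delta$-sparse. Iterating over all $\binom{h}{2}$ pairs loses only polynomial factors, and the hypothesis of no large homogeneous set forces the pattern of $H_1$.

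After this step, a random choice of one vertex from each $V_u$ with $u\neq v$ gives, with positive probability, an induced copy of $H_1-v$. This copy has a set $X\subseteq V_v$ of $n^{\delta'}$ vertices, each of which extends it to an induced $H_1$. Then $G[X]$ must be $H_2$-free, since otherwise $G$ contains $H$. So $G[X]$ has a homogeneous set of size $|X|^{c_2}=n^{\delta' c_2}$, which contradicts the assumption once $\varepsilon<\delta'c_2$.

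\textbf{Conclusion.} $\cP_{a,b}$ is obtained from $P_4$ by substituting $\overline{K_1},\overline{K_2},\overline{K_a},\overline{K_b}$ for its four vertices in order. Applying the substitution lemma four times to the base cases gives the constant $c(a,b)$.

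\textbf{Main obstacle.} The hard step is the regularisation in the substitution lemma. Making the pairwise densities polynomially extreme, and consistent with $H_1$, without destroying polynomial sizes requires the careful density-increment bookkeeping of \cite{aps}.

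For $\cP_{a,b}$ specifically, I would first try a direct route. Take the vertex pair with the largest common neighbourhood inside a high-degree set, and use Ramsey on the neighbourhoods of the $b$-side. If that works, it would avoid the general lemma.
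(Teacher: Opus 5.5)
Your top-level reduction matches what the paper relies on. The paper gives no argument of its own: it observes that $\cP_{a,b}$ arises from $P_4$ by substituting independent sets of sizes $1,2,a,b$, and invokes the substitution theorem of \cite{aps}. Your base cases (i)--(iii) are correct, and simply citing that theorem would finish the job.

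The gap is in the proof you sketch for the substitution lemma, specifically the regularisation step. The $m$ greedy disjoint copies of $H_1$ only tell you how the $i$-th vertex of $V_u$ is joined to the $i$-th vertex of $V_w$. They say nothing about the other pairs between $V_u$ and $V_w$. If $uw\in E(H_1)$, the bipartite graph between $V_u$ and $V_w$ could be little more than the matching $\{u_iw_i\}$, and then no pair of large subsets is almost complete. The absence of homogeneous sets of size $n^{\varepsilon}$ gives no mechanism forcing the cross-densities to follow the pattern of $H_1$; you assert this without justification. Moreover, the ``dichotomy'' you quote fails as stated, since it does not use $H$-freeness: in a random bipartite graph every pair of polynomial-size subsets has density close to $1/2$, yet there are no large bicliques or anti-bicliques. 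So the random choice yielding an induced $H_1-v$ with a large extension set $X\subseteq V_v$ is unsupported.

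The missing idea is supersaturation: apply the EH-property of $H_1$ to \emph{every} subset, not just once to the leftover set.
\begin{enumerate}[label=(\arabic*)]
\item If $G$ has no homogeneous set of size $n^{\varepsilon}$, then every set of $t=n^{\varepsilon/c_1}$ vertices contains an induced $H_1$.
\item Double counting then gives at least $\binom{n}{t}/\binom{n-h}{t-h}\ge (n/t)^h$ induced copies of $H_1$.
\item Each copy is determined by an ordered copy of $H_1-v$ (at most $n^{h-1}$ choices) together with the vertex playing $v$. Hence some fixed copy of $H_1-v$ has a set $X$ of at least $n/t^{h}=n^{1-h\varepsilon/c_1}$ vertices extending it to an induced $H_1$.
\item All vertices of $X$ have the same neighbours in that copy, so $G[X]$ is $H_2$-free.
\item The EH-property of $H_2$ gives a homogeneous set of size $n^{c_2(1-h\varepsilon/c_1)}$. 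This beats $n^{\varepsilon}$ once $\varepsilon<c_1c_2/(c_1+hc_2)$.
\end{enumerate}
With this replacing your greedy-plus-regularisation step, no density-increment bookkeeping is needed, and your substitutions into $P_4$ then yield $c(a,b)$.
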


\section{The Main Result}
We prove that the class of IS$\cP_{a,b}$-free graphs is $\chi$-bounded. Note that it is enough to prove the bound on the chromatic number when $a=b$, because   $\cP_{a,a}$ contains $\cP_{a',b'}$ as an induced subgraph  whenever $a',b' \leq a$. In our proof, we will assume that $a \ge 2$. Let $\cC_a =\ $IS$\cP_{a,a}$-free denote the class of graphs that do not contain an induced subdivision of $\cP_{a,a}$. Our main result is the following. 

\begin{theorem} \label{thm:main}
    Let $a \in \mathbb{N}$ with $a \geq 2$. Then $\cC_a$ is $\chi$-bounded. 
\end{theorem}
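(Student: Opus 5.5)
We argue by induction on $\omega = \omega(G)$. The case $\omega = 1$ is trivial. Assume $f(\omega-1)$ has been defined, and let $G \in \cC_a$ have clique number $\omega$ and huge chromatic number. By Theorem \ref{thm:narayanan} we may pass to an induced subgraph $G'$ that is still in $\cC_a$, is $k$-connected, and has chromatic number at least $k$, where $k$ is as large as we like.

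\textbf{Finding a large biclique.} Large chromatic number forces a subgraph of large minimum degree, hence of large average degree. Since $G'$ has no induced subdivision of $\cP_{a,a}$, Theorem \ref{thm:poly} (or Theorem \ref{thm:kuhn-osthus}) gives a (not necessarily induced) $K_{s,s}$ in $G'$, with sides $S$ and $T$ and $s$ enormous. We then clean it using Ramsey-type arguments. Since $\omega(G')\le \omega$ and $\cP_{a,a}$-free graphs have the Erd\H{o}s--Hajnal property (Theorem \ref{thm:aps}), Ramsey's theorem inside $S$ and inside $T$ yields independent subsets $S' \subseteq S$ and $T' \subseteq T$ of size $s'$, where $s'$ is still large (polynomially related to $s$). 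Now $S'\cup T'$ induces a complete bipartite graph $K_{s',s'}$.

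\textbf{Using connectivity to add the pendant structure.} We need a vertex $v$ attached, through induced paths, to exactly two vertices $x,y$ on one side, say $T'$. Since we want $x,y\in T'$ only, take $A\subseteq S'$ with $|A|=a$ and further vertices in $T'$ to form $B$; the roles are symmetric. The plan is to use the high connectivity of $G'$ to find a path leaving $S'\cup T'$ and returning. More concretely:
\begin{itemize}
\item Let $Z$ be the set of vertices outside $S'\cup T'$.
\item Classify each $z\in Z$ by its neighbourhood in $S'\cup T'$. By Ramsey and pigeonhole arguments, each vertex is either mostly complete or mostly anticomplete to large sub-bicliques. This is where the notions of $a$-connected and $a$-disconnected enter.
\item Vertices with many neighbours on both sides create triangles. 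Together with the bound $\omega$, this limits how many such vertices can exist.
\item A component $C$ of $G'\setminus N[\text{biclique}]$ with large chromatic number, whose attachments go to just two vertices of $T'$ via induced paths, gives the subdivision of $\cP_{a,a}$ directly.
\end{itemize}

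\textbf{The main obstacle.} The hardest step is to show that, otherwise, the neighbourhood of the biclique has bounded chromatic number. If so, high connectivity together with high chromatic number elsewhere forces a path $P$ from some vertex $x\in T'$ to another vertex $y\in T'$. The path must be induced, and its interior must be anticomplete to $S'\cup T'$ except at its ends, after shrinking the biclique. Its midpoint plays the role of $v$, which gives an induced subdivision of $\cP_{a,a}$.

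For the bound on the neighbourhood, I would try the following. For each vertex of $S'$ (respectively $T'$), its neighbourhood lies in a graph of clique number at most $\omega-1$ together with that vertex. By induction, each such neighbourhood has chromatic number at most $f(\omega-1)$. The difficulty is that the union of these neighbourhoods is controlled only after we pass to a bounded-size sub-biclique. I would handle this by repeatedly deleting neighbourhoods of minimal sub-bicliques, in the style of Gy\'arf\'as path arguments: take a vertex, consider levels of distance from it, and choose a level with high chromatic number. We then run the argument inside that level, where every vertex has a neighbour in the previous level. This ensures that any attachments to the biclique occur through short induced paths that can be rerouted so as to hit exactly two vertices $x,y$ of one side.

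Combining these steps yields $f(\omega)$ in terms of $f(\omega-1)$, $a$, and the constants from Theorems \ref{thm:poly} and \ref{thm:aps}, which completes the induction.
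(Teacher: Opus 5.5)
\textbf{Verdict: there is a genuine gap.} It is the step where you claim that high connectivity, together with high chromatic number outside $N[S'\cup T']$, forces an induced path $P$ between two vertices $x,y\in T'$ whose interior is anticomplete to $S'\cup T'$ ``after shrinking the biclique''.

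\textbf{Why the path step fails as stated.} Any such path must pass through $N(S'\cup T')$, and you have no control over how those vertices attach to the biclique. The neighbour of $x$ on $P$ may be adjacent to all but a few vertices of $S'$, or of both sides. Deleting a bounded number of vertices from each side can then never leave $a$ vertices anticomplete to the interior. Even if each interior vertex sees only a few biclique vertices, a long path can see all of them collectively.

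\textbf{Two further problems.} Your remark that bounded $\omega$ limits the number of vertices with many neighbours on both sides is false for a fixed biclique. A large complete tripartite graph has $\omega=3$ and lies in $\cC_a$: every subdivision of $\cP_{a,a}$ contains an induced $K_1+K_2$, and complete multipartite graphs do not. Yet its third part is complete to the biclique formed by the other two. Conversely, the step you flag as the main obstacle is not one. Since $|S'\cup T'|$ is bounded in terms of $\omega$, the bound $\chi(N[S'\cup T'])\le 2s'f(\omega-1)+2$ follows at once from the induction hypothesis. The Gy\'arf\'as-style levelling is unnecessary and does not address the real difficulty.

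\textbf{What is missing: the paper's structural analysis.}
\begin{itemize}
\item \emph{Template.} The paper replaces the biclique by a maximal complete $r$-partite template $X$ with part sizes $f-a(r-2)$. Maximality together with Lemma~\ref{lem:nearly-disconnected} bounds the set $Z$ of vertices that are $a$-disconnected to every part.
\item \emph{Classification.} Lemmas~\ref{lem:adjacency} and~\ref{lem:adjacency-type} then show that every other vertex is, part by part, either $1$-connected or $(a-1)$-disconnected. This gives the partition into $C$, $M$, and the set $A$ of vertices $1$-connected to every part.
\item \emph{Roles of path vertices.} The endpoints $x,y$ of a would-be forbidden path are drawn from $X$, $C_i$ or $M_{ij}$. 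Path interiors are confined to a component $Q$ of $A$.
\item \emph{Bounded attachments.} Choosing minimal paths shows that $Q$ has at most one neighbour in each part, at most $\omega$ neighbours in each $M_{ij}$, at most $\omega f^{a+1}$ in each $C_i$, and boundedly many in $Z$.
\item \emph{Contradiction.} It does not come from building a subdivision directly. Instead $N(Q)$ is a small cutset separating $Q$ from $X\setminus N(Q)$, contradicting $b$-connectivity.
\item \emph{Case $A=\emptyset$.} Then $a+1$ vertices from each part dominate $G$, and induction on $\omega$ bounds $\chi$.
\end{itemize}
Without the template maximality and the confinement of path interiors to vertices with at most one neighbour per part, your path-finding step has no justification.
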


The proof of Theorem \ref{thm:main} starts with an idea (so-called ``templates'') first used by Kierstead and Penrice \cite{kp} and Kierstead and Zhu \cite{kzhu}. Theorem \ref{thm:kuhn-osthus} allows us to restrict our attention to graphs $G$ that contain a large complete bipartite graph, which we carefully grow into a complete multipartite graph (the ``template'') according to a set of rules. While the proofs in \cite{kp, kzhu} use this to partition the graph, we instead analyze how the remainder of $G$ attaches to our template, and find a small cutset. This allows us to leverage Theorem \ref{thm:narayanan} to bound the chromatic number. 

In Section \ref{section:attach}, we prove some initial results about how vertices attach to complete multipartite graphs, and in Section \ref{section:main}, we prove the main result. 

\section{Attaching to a complete multipartite graph} \label{section:attach}

Throughout the remainder of the paper, we fix $a \in \mathbb{N}$ with $a \geq 2$. Since Theorem \ref{thm:aps} can be used to improve bounds from applying Ramsey's theorem in our class of graphs, we define $\cR(p, q) = (p+q)^{1/c(a, a)}$ where $c(a,a)$ is defined as in Theorem \ref{thm:aps}. It follows that every graph in $\cC_a$ on at least $\cR(p,q)$ vertices contains a clique or an independent set of size at least $p+q$. 

\begin{lemma}
\label{lem:adjacency}
	Let $G \in \cC_a$ and let $A$ and $B$ be two independent sets in $G$ such that $A$ is complete to $B$. Let $v$ be a vertex in $G$ that is not in $A$ or $B$. If $v$ has at least $a$ non-neighbours in $A$, then $v$ is either 1-connected to $B$ or $(a-1)$-disconnected to $B$ (that is, $v$ cannot simultaneously have two neighbours and $a$ non-neighbours in $B$).
\end{lemma}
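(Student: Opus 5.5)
We argue by contradiction and exhibit an induced copy of $\cP_{a,a}$ itself (a trivial subdivision), which contradicts $G \in \cC_a$. So suppose $v$ has at least $a$ non-neighbours in $A$, and in $B$ has two neighbours $x,y$ and at least $a$ non-neighbours. Choose a set $A' \subseteq A$ of $a$ non-neighbours of $v$, and a set $B' \subseteq B$ of $a$ non-neighbours of $v$, disjoint from $\{x,y\}$ (automatic, since $x,y$ are neighbours of $v$).

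The candidate copy of $\cP_{a,a}$ uses the partition $\{v\}, \{x,y\}, A', B'$, with the roles in the definition: $v$ the apex, $\{x,y\}$ the pair, then $A'$ in the role of $A$ and $B'$ in the role of $B$. We now check conditions (a)--(e). $\{x,y\}$, $A'$ and $B'$ are independent because $A$ and $B$ are independent. Next, $v$ is complete to $\{x,y\}$ by choice. The pair $\{x,y\} \subseteq B$ is complete to $A' \subseteq A$, and $A'$ is complete to $B'$, since $A$ is complete to $B$.

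For the absence of other edges, we need three things. First, $v$ is anticomplete to $A'$ and to $B'$, by choice of non-neighbours. Second, $\{x,y\}$ is anticomplete to $B'$, since all of them lie in the independent set $B$. Third, the parts must be disjoint: $v \notin A \cup B$ by hypothesis, and $A \cap B = \emptyset$ because a vertex in both would be adjacent to itself. Hence the subgraph induced on $\{v,x,y\} \cup A' \cup B'$ is exactly $\cP_{a,a}$, which is the desired contradiction.

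There is no real obstacle. The only care needed is matching roles correctly: in $\cP_{a,b}$ the pair $\{x,y\}$ sits on the side complete to the first set, so we must place $A'$ next to the pair and $B'$ at the far end. That is, the pair comes from $B$ and both $a$-sets are non-neighbours of $v$. This also explains the hypothesis of $a$ non-neighbours in $A$, and why we need $a$ (not $a-1$) non-neighbours in $B$. Thus $v$ cannot have two neighbours and $a$ non-neighbours in $B$ simultaneously, i.e.\ $v$ is 1-connected or $(a-1)$-disconnected to $B$.
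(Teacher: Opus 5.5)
Your proposal is correct and follows the paper's proof: assuming $v$ has $a$ non-neighbours in $A$, two neighbours in $B$ and $a$ non-neighbours in $B$, the set $\{v\}\cup\{x,y\}\cup A'\cup B'$ induces $\mathcal{P}_{a,a}$ itself, contradicting $G\in\mathcal{C}_a$. You place the two neighbours of $v$ in $B$ as the pair, $A'$ next to them and $B'$ at the far end, which is exactly the paper's assignment; your explicit check of the non-edges is a faithful expansion of its one-line argument.
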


\begin{proof}
	For a contradiction, suppose that $v$ has $a$ non-neighbours $x'_1,\dots,x'_a \in A$, two neighbours $y_1,y_2 \in B$, and $a$ non-neighbours $y'_1, \dots, y'_a \in B$. Then $\{v\} \cup \{y_1,y_2\} \cup \{x'_1,\dots,x'_a\} \cup \{y'_1,\dots,y'_a\}$ would induce a copy of $\cP_{a,a}$, a contradiction.
\end{proof}
\medskip

We use $X = (X_1,X_2,\dots,X_r)$ to denote a complete $r$-partite graph with \emph{parts} $X_1,\dots,X_r$. In other words, $X_1, \dots, X_r$ are disjoint non-empty independent sets, and for $i \neq j$, every vertex in $X_i$ is adjacent to every vertex in $X_j$. 
The following observation follows from Lemma~\ref{lem:adjacency}. \medskip

\begin{lemma}
\label{lem:adjacency-type}
	Let $G \in \cC_a$ and let $X = (X_1, \dots, X_r)$ be a complete $r$-partite induced subgraph of $G$
    where each part has size at least $a+1$. Let $Z$ be the set of vertices in $G \sm X$ that are $(a-1)$-disconnected to every part of $X$. Let $v$ be a vertex of $G \sm (X \cup Z)$. Then the following holds. 
	
\begin{enumerate}[label=(\alph*)]	
	\item There exists a part $X_i$ such that $X_i$ contains at least $a$ non-neighbours of~$v$.
	
	\item For every part $X_j$ of $X$ with $j \neq i$, either $v$ is $1$-connected to $X_j$ or $v$ is $(a-1)$-disconnected to $X_j$.
	
	\item If there exists some $j \neq i$ such that $v$ is 1-connected to $X_j$, then $v$ is 1-connected to $X_i$.
\end{enumerate}
\end{lemma}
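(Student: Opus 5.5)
Each of the three items comes from a direct application of Lemma~\ref{lem:adjacency}, using the fact that in the complete multipartite graph $X$ any two distinct parts $X_i, X_j$ are independent sets with $X_i$ complete to $X_j$. Since $v \notin X$, it lies in neither part, so Lemma~\ref{lem:adjacency} applies to every ordered pair of distinct parts.

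For (a), we use the definition of $Z$. Because $v \notin Z$, there is some part $X_i$ to which $v$ is not $(a-1)$-disconnected. By definition this means $v$ has at least $a$ non-neighbours in $X_i$. We fix this index $i$ for the remaining items.

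For (b), we take $j \neq i$ and apply Lemma~\ref{lem:adjacency} with $A = X_i$ and $B = X_j$. The vertex $v$ has at least $a$ non-neighbours in $A$ by (a). The lemma then says $v$ is either $1$-connected or $(a-1)$-disconnected to $X_j$, which is exactly (b).

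For (c), we assume $v$ is $1$-connected to some $X_j$ with $j \neq i$, and apply Lemma~\ref{lem:adjacency} in the reverse direction, with $A = X_j$ and $B = X_i$. First we check the hypothesis. Since $v$ has at most one neighbour in $X_j$ and $|X_j| \ge a+1$, it has at least $a$ non-neighbours in $X_j$. So $v$ is either $1$-connected or $(a-1)$-disconnected to $X_i$. The second option would mean $v$ has at most $a-1$ non-neighbours in $X_i$, contradicting (a). Hence $v$ is $1$-connected to $X_i$.

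This last step, with its reversed roles and its use of $|X_j| \ge a+1$, is the only place the size assumption on the parts is needed, and it is the one point that needs any care. No other obstacle is expected.
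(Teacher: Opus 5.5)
Your proof is correct and matches the paper's argument: (a) comes from $v \notin Z$, (b) from Lemma~\ref{lem:adjacency} with $A = X_i$ and $B = X_j$, and (c) from Lemma~\ref{lem:adjacency} with the roles reversed, using $|X_j| \ge a+1$ to get $a$ non-neighbours in $X_j$. In (c) you are slightly more explicit than the paper, since you spell out that the $(a-1)$-disconnected alternative for $X_i$ is ruled out by (a).
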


\begin{proof}
	Statement (a) follows from the fact that $Z$ contains all vertices that have at most $a-1$ neighbours in each part of $X$, and $v \notin Z$. 
	Statement (b) follows from (a) and Lemma~\ref{lem:adjacency}. 
	Statement (c) follows from the fact that $v$ has at most one neighbour in $X_j$ (meaning that $v$ has at least $a$ non-neighbours in $X_j$), hence by Lemma~\ref{lem:adjacency} applied to $X_j$ and $X_i$, it follows that $v$ does not have two neighbours in $X_i$, so $v$ is 1-connected to $X_i$.
\end{proof}
\medskip

\begin{lemma}
\label{lem:nearly-disconnected}	
	Let $s, w\in \mathbb{N}$. Let $G$ be a graph with $\omega(G) = w$, and let $X = (X_1, \dots, X_r)$ be a complete $r$-partite graph in $G$. Let $Z$ be the set of vertices in $G \sm X$ such that for every vertex $v \in Z$, $v$ is $a$-disconnected to $X_i$ for every $i \in \{1,2,\dots,r\}$.
	If for every $i \in \{1,2,\dots,r\}$, we have $|X_i| = s+a$, and $|Z| \geq \cR(w, s) (s+a+1)^{ra}$, then $X \cup Z$ contains an induced complete $(r+1)$-partite graph such that each of its parts contains at least $s$ vertices.
\end{lemma}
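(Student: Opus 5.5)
We would prove this by pigeonhole followed by the Ramsey-type bound encoded in $\cR$. Each $v \in Z$ misses at most $a$ vertices in each part $X_i$. For each $i$, the set of non-neighbours of $v$ in $X_i$ is a subset of $X_i$ of size at most $a$. We encode it as an $a$-tuple with entries in $X_i \cup \{\ast\}$, padding with a dummy symbol $\ast$ and listing elements in a fixed order. This gives at most $(s+a+1)^a$ possible values for each $i$. Hence the full non-neighbourhood pattern of $v$ across $X_1,\dots,X_r$ takes at most $(s+a+1)^{ra}$ values.

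Since $|Z| \geq \cR(w,s)(s+a+1)^{ra}$, pigeonhole yields a set $Z' \subseteq Z$ with $|Z'| \geq \cR(w,s)$ such that all vertices of $Z'$ have exactly the same non-neighbours $N_i \subseteq X_i$ in each part, with $|N_i| \le a$. Set $X'_i = X_i \sm N_i$. Then $|X'_i| \geq s$, and every vertex of $Z'$ is complete to every $X'_i$.

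Next we apply the definition of $\cR$ to $G[Z']$. We want a clique of size $w+1$ or an independent set of size $s$, and $\cR(w,s)$ is designed so that $G[Z']$ (which lies in $\cC_a$, as needed for $\cR$ to apply) contains a clique or independent set of size at least $w+s$. A clique of size $w+s > w$ is impossible since $\omega(G)=w$. So $Z'$ contains an independent set $I$ with $|I| \geq w+s \geq s$.

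Finally, $(X'_1,\dots,X'_r,I)$ is the desired graph. The sets $X'_i$ are nonempty independent subsets of distinct parts of the complete multipartite $X$, so they are pairwise complete. $I$ is independent and complete to every $X'_i$. The sets are disjoint because $Z \subseteq G \sm X$. Each part has at least $s$ vertices, and the subgraph is induced because all required adjacencies and non-adjacencies are forced.

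The only delicate point is the counting bound $(s+a+1)^{ra}$ for the number of patterns, which is where the exponent in the hypothesis comes from. The padding encoding above handles it comfortably. Everything else is routine.
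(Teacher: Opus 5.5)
Your proof is correct and takes essentially the same approach as the paper. You pigeonhole over the at most $(s+a+1)^{ra}$ possible non-neighbourhood patterns in $X$, use the bound encoded in $\cR$ (with $\omega(G)=w$ ruling out the clique) to get an independent set of size at least $s$, and delete the at most $a$ common non-neighbours from each part. As you flag, this needs $G[Z']$ to be in $\cC_a$; the paper uses this too, although the lemma's statement omits it, and it holds where the lemma is applied.
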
	

\begin{proof}
By the Pigeonhole Principle, the set $Z$ contains a subset $J$ of $\cR(w, s)$ vertices such that for every pair of vertices $x$ and $y$ in $J$, $N(x) \cap X = N(y) \cap X$. To see this, suppose for a contradiction that such a set $J$ does not exist. Note that $X$ contains at most $(s+a+1)^{ra}$ different subsets that contain at most $a$ vertices in each part of $X$. By assumption, each subset can only be complete to a subset of $Z$ of size at most $\cR(w, s) - 1$. Hence the size of $Z$ would be at most $\cR(w, s) (s+a+1)^{ra}$, a contradiction. 

By Theorem \ref{thm:aps} applied to $J$, we know that $J$ contains an independent set $I$ of size at least $s$. Now form a graph $fX'$ by removing the non-neighbours of $I$ in every part of $X$. Note that at most $a$ vertices are removed from each part of $X$. So $I$ is complete to $X'$, and since $(s+a) - a =s$, every part of $X'$ has size at least $s$. Hence, $I \cup X'$ induces a complete $(r+1)$-partite graph, where each part has a size of at least $s$.
\end{proof}
\medskip

\section{Proof of the main theorem} \label{section:main}

\begin{theorem} \label{thm:maindetails}
Let $a \in \mathbb{N}$ with $a \geq 2$. Let $G' \in \cC_a$, and suppose that every induced subgraph $H$ of $G'$ with $\omega(H) < \omega(G')$  satisfies $\chi(H) \leq \tau$, for some $\tau \in \mathbb{N}$. Then we have 
$$\chi(G') < \left(3+\frac{1}{16}\right) b$$ 
where 
$$b = 1+\max\{d(\cP_{a,a}, \cR(\omega(G'), f)), (1+\tau)(a+1)\omega(G'), \cR(\omega(G'), f) (f+1)^{a\omega(G')}+ \omega(G')^2 (f^{a+1}+2)\},$$
and
$$f = a(\omega(G')+1) + 2.$$
\end{theorem}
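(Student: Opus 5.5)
Suppose for a contradiction that $\chi(G') \geq (3+\frac{1}{16})b$. By Theorem~\ref{thm:narayanan} (taking the subgraph induced), there is an induced subgraph $G$ of $G'$ with $\chi(G)\ge b$ and connectivity at least $b$. Write $w=\omega(G)\le\omega(G')$. If $w<\omega(G')$, then $\chi(G)\le\tau<b$, which is impossible; so $w=\omega(G')$.

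Since $\chi(G)\ge b$, $G$ has a subgraph of minimum degree at least $b-1\ge d(\cP_{a,a},\cR(w,f))$. Theorem~\ref{thm:kuhn-osthus} then gives a (not necessarily induced) $K_{t,t}$ with $t=\cR(w,f)$. By the definition of $\cR$, each side contains an independent set of size $f$, since its clique number is at most $w$. So $G$ contains an induced complete bipartite graph with both parts of size $f$.

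Next I grow a template. Among all induced complete multipartite subgraphs $X=(X_1,\dots,X_r)$ with every part of size exactly $f$ (trimming parts where needed), I choose one with $r$ maximum; note $r\le w$. Let $Z$ be the set of vertices of $G\sm X$ that are $a$-disconnected to every part. I would apply Lemma~\ref{lem:nearly-disconnected} with $s=f-a$: if $|Z|\ge \cR(w,f)(f+1)^{aw}$, it produces an $(r+1)$-partite template, and after trimming or re-adjusting the sizes this contradicts maximality. The bookkeeping between $f$ and $f-a$ is what the $+2$ and the $a(w+1)$ in $f$ appear designed to absorb. Hence $|Z|$ is bounded by roughly $\cR(w,f)(f+1)^{aw}$.

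Every other vertex $v\notin X\cup Z$ is classified by Lemma~\ref{lem:adjacency-type}. There is a part $X_i$ with at least $a$ non-neighbours of $v$. Every other part is either $1$-connected or $(a-1)$-disconnected to $v$. If some other part is $1$-connected, then $X_i$ is too.

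The heart of the proof is to find a small cutset separating a large-chromatic piece, which contradicts $b$-connectivity. I propose to sort $V(G)\sm(X\cup Z)$ by its ``type'': the index $i$ together with the set of parts to which $v$ is nearly complete. Vertices that are nearly complete to all parts but one behave like extra vertices of $X_i$. If such a set $Y_i$ had a large independent subset, it would enlarge $X_i$ or create a bigger template, so $Y_i$ has bounded size or bounded independence. Vertices that are $1$-connected to at least two parts have few neighbours in $X$.

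I then take $C=Z\cup\{$the few neighbours in $X$ of such vertices, grouped by their attachment patterns$\}$, whose size should be bounded by the term $w^2(f^{a+1}+2)$. I would argue that $G\sm C$ separates $X$ from the remaining vertices, or else that the remaining vertices have clique number less than $w$. For the second case, each vertex is complete to some part $X_j$ up to $a-1$ vertices, so a $w$-clique among those vertices together with a common neighbour in $X_j$ would form a $(w+1)$-clique. This is where the $(1+\tau)(a+1)w$ term enters: the vertices split into at most $(a+1)w$ classes, each with $\chi\le\tau$, plus $X$ itself.

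The main obstacle is this last step: showing that paths leaving $X$ through vertices of few attachments can be blocked by boundedly many vertices. Here I would use Lemma~\ref{lem:adjacency} on suitable pairs of parts to show that such a path, together with $X$, yields an induced subdivision of $\cP_{a,a}$, with $v$ playing the role of the far endpoint of the path.

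Combining: either $|C|<b$ separates $G$, contradicting $b$-connectivity, or $\chi(G)\le (1+\tau)(a+1)w+w<b$. Both contradict the choice of $G$, which proves the bound.
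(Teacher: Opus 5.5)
The opening of your proposal matches the paper: Nguyen's theorem, then Kühn--Osthus and $\cR$ to get an induced $K_{f,f}$. (Apply Theorem~\ref{thm:kuhn-osthus} to an induced subgraph, e.g.\ $G$ itself, so the subdivision it finds is induced in $G$.)

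\textbf{The template step fails as written.} If every part must have size exactly $f$, Lemma~\ref{lem:nearly-disconnected} with $s=f-a$ only produces an $(r+1)$-partite graph with parts of size $f-a$. No trimming turns that into a template with parts of size $f$, so maximality of $r$ is not contradicted. The paper fixes this by letting the part size depend on $r$. It maximizes $r$ over templates with parts of size exactly $f-a(r-2)$, so the output of Lemma~\ref{lem:nearly-disconnected} (parts of size $f-a(r-1)$) is itself an admissible template for $r+1$. The choice $f=a(w+1)+2$ only guarantees that every part keeps at least $3a+2$ vertices.

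\textbf{The core argument has a genuine gap.} You claim that the vertices nearly complete to all parts but $X_i$ form a set of bounded size or bounded independence. This does not follow from maximality: a large independent set of such vertices, complete to the other parts, merely gives another template with the same $r$. For example, in a complete $w$-partite graph with huge parts (which lies in $\cC_a$), every vertex outside $X$ is of this type. These vertices, and those that are $(a-1)$-disconnected to some part, have many neighbours in $X$. So no cutset made of $Z$ and a few vertices of $X$ separates them from $X$, and your final dichotomy is asserted rather than derived.

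\textbf{The missing idea} is to split on the set $A$ of vertices outside $X\cup Z$ that are $1$-connected to \emph{every} part.
\begin{enumerate}
\item If $A=\emptyset$, every vertex outside $X$ is $a$-disconnected to some part. So $a+1$ vertices from each part form a dominating set $S$ with $|S|\le (a+1)w$. Each $N(s)\sm S$ has clique number less than $w$, giving $\chi(G)\le (1+\tau)(a+1)w$. This is what your last count approximates.
\item If $A\neq\emptyset$, take a connected component $Q$ of $A$ and bound $|N(Q)|$:
\begin{itemize}
\item at most one neighbour in each $X_i$;
\item at most $w$ neighbours in each of the $2r-2$ classes $M_{ij}$;
\item at most $w f^{a+1}$ neighbours in each $C_i$, after pigeonholing on a common set of $a+1$ non-neighbours in $X_i$;
\item fewer than the bound on $|Z|$ in $Z$.
\end{itemize}
Each bound uses two non-adjacent neighbours $x,y$ of $Q$ in the class, a shortest $x$--$y$ path through $Q$, and two trimmed parts of $X$ as the two sets of size $a$. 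The path through $Q$ subdivides the two edges at the degree-two vertex of $\cP_{a,a}$; that vertex is interior to the path, not its endpoint as you suggest. Then $N(Q)$ is a cutset of size less than $b$ separating $Q$ from $X\sm N(Q)$, contradicting $b$-connectivity.
\end{enumerate}
Without this choice of what to separate, and these neighbour bounds, your argument does not reach a contradiction.
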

\begin{proof}
	 Let $G', a, b, f$ be as in the statement of Theorem \ref{thm:maindetails}. Suppose that $\chi(G') \geq (3+\frac{1}{16})b$. By Theorem \ref{thm:narayanan}, it follows that $G'$ contains an induced subgraph $G$ such that $\chi(G) \geq b$ and $G$ is $b$-connected. Define $s = \cR(\omega(G), f)$. If $G$ does not contain $K_{s,s}$ as a subgraph, by Theorem~\ref{thm:kuhn-osthus}, every induced subgraph of $G$ has an average degree less than $d = d(\cP_{a,a}, s)$. Thus, $G$ is $(d-1)$-degenerate, and hence $d$-colourable. This contradicts that $\chi(G) \geq b \geq d+1$; and so Theorem \ref{thm:maindetails} holds.
    
    From now on, we may assume that $G$ contains a copy $Y$ of $K_{s,s}$ as a subgraph. Since $s = \cR(\omega(G), f)$, and since the graph induced in $G$ by each side of the bipartition of $Y$ has clique number at most $\omega(G) - 1$, it follows that $G[V(Y)]$ contains $K_{f,f}$ as an induced subgraph. Consider the largest $r$ such that $G$ contains a complete $r$-partite graph where each part has size exactly $f - a(r-2)$; call this complete $r$-partite graph $X = (X_1, \dots, X_r)$. Note that $r \leq \omega(G)$. Furthermore, since $G$ contains an induced copy of $K_{f,f}$, it follows that $r \geq 2.$

    Let $Z$ be the set of vertices that are $a$-disconnected to $X_i$ for all $i \in \{1,\dots,r\}$ (that is, every vertex of $Z$ has at most $a$ non-neighbours in every part of $X$). If $$|Z| \geq \cR(\omega(G), f - a(r-1)) (f - a(r-2)+1)^{ra},$$ then by Lemma~\ref{lem:nearly-disconnected}, it follows that $G$ contains a complete $(r+1)$-partite graph where each part is of size at least $f - a(r-1)$, contradicting the maximality of $X$. So we may assume that $|Z| < \cR(\omega(G), f - a(r-1)) (f - a(r-2)+1)^{ra} \leq \cR(\omega(G), f)(f+1)^{a\omega(G)}$.
The following claim follows from Lemma~\ref{lem:adjacency-type}.
\medskip

\begin{claim}
\label{cl:1}
	For every vertex $v \in G \sm (X \cup Z)$, the following hold.
	\begin{enumerate}[label=(\alph*)]
		\item If there exist at least two parts of $X$ that each contain at least $a$ non-neighbours of $v$, then $v$ is 1-connected to each such part, and $(a-1)$-disconnected to every other part.
		
		\item If there exists a unique part $X_i$ of $X$ that contains at least $a$ non-neighbours of $v$, then $v$ is $(a-1)$-disconnected to every other part, and $v$ has at least $a+1$ non-neighbours in $X_i$. 
\end{enumerate}	 
\end{claim}
Using Claim \ref{cl:1}, we partition $V(G \setminus (X \cup Z))$ into three sets: 
\begin{itemize}
\item $C$ is the set of vertices $v$ in $G \sm (X \cup Z)$ for which there is a unique part of $X$ containing at least $a$ non-neighbours of $v$; 
\item $A$ is the set of vertices in $G \sm (X \cup Z)$ that are 1-connected to every part of $X$; and
\item $M$ is the set of vertices in $G \sm (X \cup Z)$ that are $(a-1)$-disconnected to at least one part of $X$ and are 1-connected to at least two parts of $X$. 
\end{itemize}

We will study two cases, namely when $A = \emptyset$ and when $A \neq \emptyset$.
\medskip

\begin{claim} \label{claim:2}
	If $A = \emptyset$, then $G$ can be coloured using $(1+ \tau) (a+1) \omega(G)$ colours. 
\end{claim}
\medskip

\noindent \textit{Proof of Claim \ref{claim:2}.}
Consider a set $S$ containing exactly $a+1$ vertices of each part of $X$. We have $|S| \leq (a+1)\omega(G)$ because $r \leq \omega(G)$. We show that $S$ is a dominating set of $G$.
First, every vertex of $X \sm S$ has a neighbour in $S$.
Next, every vertex of $Z$ has a neighbour in $S$ (because every vertex in $Z$ has at most $a$ non-neighbours in every part $X_i$ and $|X_i \cap S| = a+1$).  
Last, every vertex $v$ in $C \cup M$ has a neighbour in $S$ because there exists a part $X_i$ of $X$ to which $v$ is $(a-1)$-disconnected.

Now for every vertex $v \in S$, note that the size of the largest clique in $N_{G-S}(v)$ is at most $\omega(G)-1$. Hence, by assumption, $N_{G-S}(v)$ can be coloured using $\tau$ colours. Since $S$ contains at most $(a+1)\omega(G)$ vertices and $S$ is a dominating set, it follows that $\chi(G \setminus S) \leq \tau (a+1) \omega(G)$. Giving each vertex of $S$ an additional unique colour proves the claim.
\medskip

From now on, we assume that $A \neq \emptyset$. Our goal will be to show that each connected component of $A$ can be separated from $X$ by deleting a small set of vertices. 
\medskip

By the definition of $C$ and by Claim \ref{cl:1}, for every vertex $v$ in $C$, there exists a unique part that contains at least $a+1$ non-neighbours of $v$; call this the \emph{conflict part} of $v$. We partition the vertices in $C$ into at most $r$ sets, based on their conflict part: For every $i \in \{1,2,\dots,r\}$, denote by $C_i$ the set of vertices in $C$ that have conflict part $X_i$.

For $i, j \in \{1, \dots, r\}$ with $i \neq j$ and with $i = 1$ or $j = 1$, we let $M_{ij}$ be the subset of $M$ containing all vertices $v \in M$ such that: 
\begin{itemize}
    \item $i$ is the minimum index such that $v$ is $(a-1)$-disconnected to $X_i$; 
    and 
    \item $j$ is the minimum index such that $v$ is 1-connected to $X_j$. 
\end{itemize}
By the definition of $M$ and Claim \ref{cl:1}, every vertex of $M$ is either $(a-1)$-disconnected or 1-connected to every part of $X$. It follows that $M_{12}, M_{13}, \dots, M_{1r}, M_{21}, M_{31}, \dots, M_{r1}$ is a partition of $M$ into at most $2r-2$ parts.

We will show that every connected component of $A$ has a bounded number of neighbours in $Z$, in $X_i$ for every $i \in \{1,\dots,r\}$, in $C_i$ for every $i \in \{1,\dots,r\}$, and in $M_{ij}$ for every possible pair $i,j$. Let $Q$ be a connected component  of $A$. Using our bound on $|Z|$, we immediately obtain: 
\medskip

\begin{claim}
\label{cl:Z}
	$Q$ has at most $\cR(\omega(G), f)(f+1)^{a\omega(G)} - 1$ neighbours in $Z$.
\end{claim}
\medskip

Next, we show: 
\begin{claim}
\label{cl:X}
For every $i \in \{1,\dots,r\}$, $Q$ has at most one neighbour in $X_i$.
\end{claim}
\medskip

\noindent \textit{Proof of Claim \ref{cl:X}.} 
For the sake of contradiction, suppose that there exists some $i$ such that $X_i$ contains at least $2$ neighbours of $Q$. 
Because $Q$ is connected, it follows that there is a path in $Q$ with at least two neighbours in $X_i$. Let $P$ be a shortest path in $Q$ such that there exists an $i' \in \{1, \dots, r\}$ with the property that $P$ has at least two neighbours in $X_{i'}$. By symmetry, we may assume $i' = i$. Let $x', y'$ be the ends of $P$. Let $x$ be the neighbour of $x'$ in $X_i$, and let $y$ be the neighbour of $y'$ in $X_i$. 

By the minimality of $P$, it follows that $xx'Py'y$ is induced, and both $P \setminus \{x'\}$ and $P \setminus \{y'\}$ each have at most one neighbour in each part of $X$.  So $|N(P) \cap X_j| \leq 2$ for every $1 \leq j \leq r$. 

Consider a part $X_k$ for some $k \neq i$ (using that $r \geq 2$), and let $X'_k$ be obtained from $X_k$ by deleting the neighbours of $P$. Let $X'_i = X_i \sm \{x,y\}$. Since each part of $X$ has size at least $f - (a-1)(r-2) \geq f - (a-1) (\omega(G)-2) \geq a+2$, it follows that $|X'_i|, |X'_k| \geq a$. Moreover, $P$ is anticomplete to $X'_i$ and $X'_k$, and $X'_i$ and $X'_k$ are complete to each other. Hence, $X'_i \cup X'_k \cup \{x,y\} \cup P$ contains an induced subdivision of $\cP_{a,a}$, a contradiction. This proves Claim \ref{cl:X}.
\medskip

\begin{claim}
\label{cl:M}
	For every possible pair $i,j$, $Q$ has at most $\omega(G)$ neighbours in $M_{ij}$.
\end{claim}
\medskip

\noindent \textit{Proof of Claim \ref{cl:M}.} 
For the sake of contradiction, suppose without loss of generality that $M_{12}$ contains at least $\omega(G)+1$ neighbours of $Q$,  where by definition, every vertex in $M_{12}$ is $(a-1)$-disconnected to $X_1$ and is 1-connected to $X_2$. Since $|N(Q) \cap M_{12}| \geq \omega(G)+1$, it follows that $N(Q) \cap M_{12}$ contains two distinct non-adjacent vertices $x, y$. Let $P$ be a shortest path from $x$ to $y$ with interior in $Q$. 

By Claim~\ref{cl:X}, $Q$ has at most one neighbour in each of $X_1$ and $X_2$. Let $X_1' = (X_1 \cap N(x) \cap N(y)) \setminus N(P \setminus \{x, y\})$. Since $x, y$ each have at most $a-1$ non-neighbours in $X_1$, and since $Q$ has at most one neighbour in $X_1$, it follows that $|X_1'| \geq |X_1| - 2a + 1 \geq a$. Let $X_2' = X_2 \setminus N(P)$. Since each of $Q, x, y$ have at most one neighbour in $X_2$, it follows that $|X_2'| \geq |X_2| - 3 \geq a$. Now $X'_2 \cup X'_1 \cup \{x,y\} \cup P$ contains an induced subdivision of $\cP_{a,a}$, a contradiction.
This proves Claim \ref{cl:M}.
\bigskip

\begin{claim}
\label{cl:C}
	 For every $i \in \{1, \dots, r\}$, $Q$ has at most $\omega(G) f^{a+1}$ neighbours in $C_i$.
\end{claim} 
\medskip

\noindent \textit{Proof of Claim \ref{cl:C}.} For the sake of contradiction suppose without loss of generality that $C_1$ contains more than $\omega(G) f^{a+1}$ neighbours of $Q$; let $R = N(Q) \cap C_1$. For each vertex $v \in R$, let $S_v$ be a set of exactly $a+1$ non-neighbours of $v$ in $X_1$. Since $|X_1| = f - a(r-2) \leq f$, it follows that there are at most $f^{a+1}$ possible choices of $S_v$. By the Pigeonhole Principle, it follows that there is a set $R' \subseteq R$ and a set $S \subseteq X_1$ with $|S| = a+1$ such that $S_v = S$ for all $v \in R'$, and $|R'| > \omega(G)$. Consequently, $R'$ contains two distinct non-adjacent vertices, say $x$ and $y$. 

Since $Q$ is connected, 
there exists a shortest (and therefore induced) path $P$ with interior in $Q$ connecting $x$ and $y$. Let $X'_2$ be obtained from $X_2$ by removing at most $2(a-1)+1 = 2a-1$ vertices including all non-neighbours of $x$ and $y$, and the unique neighbour of $P \setminus \{x, y\}$ (if any). Then $\{x,y\}$ is complete to $X'_2$, and $P \setminus \{x, y\}$ is anticomplete to $X'_2$. Note that $|X'_2| \geq 3a+2-(2a-1) \geq a$. Let $S' = S \setminus N(Q)$. It follows that $|S'| \geq a$ by Claim \ref{cl:X}. Now $S' \cup X'_2 \cup \{x,y\} \cup P$ contains an induced subdivision of $\cP_{a,a}$, a contradiction. 
This proves Claim \ref{cl:C}.

\bigskip

Now we compute the number of neighbours of $Q$ in $G \sm A$.
By Claims~\ref{cl:X}-\ref{cl:C}, it follows that:
\begin{align*} 
|N(Q)| &< \cR(\omega(G), f)(f+1)^{a\omega(G)} + \omega(G) +  \omega(G)^2 f^{a+1} + (2\omega(G)-2)\omega(G)\\
&\leq \cR(\omega(G), f)(f+1)^{a\omega(G)}  +  \omega(G)^2 (f^{a+1} + 2) \leq b. 
\end{align*}
In particular, $Q$ has at most $r$ neighbours in $X$ by Claim \ref{cl:X}, and so $N(Q)$ is a cutset in $G$ separating $Q$ (which is non-empty) from $X \setminus N(Q)$ (which is also non-empty). Since $|N(Q)| < b$, this contradicts the fact that $G$ is $b$-connected, a contradiction. This completes the proof.
\end{proof}

We note that the overall bound is not polynomial. While Theorems \ref{thm:aps} and \ref{thm:poly} yield polynomial bounds for $d(\cP_{a,a}, \cdot)$ and $\cR(\cdot, \cdot)$, two places remain that do not lend themselves to a polynomial bound. The first is the usage of $\tau$ (and thus implicit induction on $\omega(G)$), which shows that our current proof gives a bound of at least $\omega(G)^{\omega(G)}$. The second is the use of Lemma \ref{lem:nearly-disconnected}, which again is in the $\omega^{\mathcal{O(\omega)}}$ range. However, it seems unlikely that this is the best possible bound, so we have not optimized this further. 

To compute the bound given by our result, we start by computing $$\cR(\omega(G), f) \leq (\omega(G)+f)^{1/c(a,a)} \leq ((a+1)(\omega(G)+1))^{1/c(a,a)}$$
using Theorem \ref{thm:aps}. Next, using Theorem \ref{thm:poly}, we obtain that 
$$d(\cP_{a,a}, \cR(\omega(G), f)) \leq ((a+1)(\omega(G)+1))^{\frac{500(2a+3)^2}{c(a,a)}}.$$
Therefore, 
\begin{align*}
    \chi(G) \leq 4\left(((a+1)(\omega(G)+1))^{\frac{500(2a+3)^2}{c(a,a)}} + (1+\tau)(a+1)\omega(G) + ((a+1)(\omega(G)+1))^{\frac{a \omega(G)}{c(a,a)}} \right)
\end{align*}
using that $\omega(G)^2(f^{a+1}+2) \leq (\omega(G)+f)^{2a+2} \leq (\omega(G)+f)^{1/c(a,a)}.$
From here, a straight-forward induction shows that 
$$\chi(G) \leq \left(((a+1)(\omega(G)+1))^{\frac{500(2a+3)^2}{c(a,a)}} + ((a+1)(\omega(G)+1))^{\frac{a \omega(G)}{c(a,a)}} \right)(8(a+1)(\omega(G)))^{\omega(G)}.$$

\bibliographystyle{plain}

\end{document}